\documentclass[12pt, leqno,twoside]{article}
\usepackage{amssymb}
\usepackage{amsmath}
\numberwithin{equation}{section}
\usepackage{amsthm}
\usepackage{graphicx}
\usepackage[pagebackref,colorlinks,citecolor=blue,linkcolor=blue]{hyperref}
\usepackage{cite}
\usepackage{tikz-cd}
\usepackage{todonotes}
\usepackage{amsmath}
\usepackage{amsfonts}
\usepackage{amssymb}
\usepackage{amsmath,bbm,amssymb,amsxtra}
\usepackage{mathrsfs}
\usepackage{enumerate}
\usepackage{caption}
\allowdisplaybreaks
\usepackage{epsfig}
\usepackage{ae}
\usepackage{epstopdf}
\DeclareGraphicsRule{*}{mps}{*}{}
\textwidth=16cm
\textheight=21.2cm
\oddsidemargin 0.45cm
\evensidemargin 0.45cm

\parindent=16pt

\def\vint{\mathop{\mathchoice%
         {\setbox0\hbox{$\displaystyle\intop$}\kern 0.22\wd0%
          \vcenter{\hrule width 0.6\wd0}\kern -0.82\wd0}%
         {\setbox0\hbox{$\textstyle\intop$}\kern 0.2\wd0%
          \vcenter{\hrule width 0.6\wd0}\kern -0.8\wd0}%
         {\setbox0\hbox{$\scriptstyle\intop$}\kern 0.2\wd0%
          \vcenter{\hrule width 0.6\wd0}\kern -0.8\wd0}%
         {\setbox0\hbox{$\scriptscriptstyle\intop$}\kern 0.2\wd0%
          \vcenter{\hrule width 0.6\wd0}\kern -0.8\wd0}}%
         \mathopen{}\int}

\newcommand{\diam}{\,\text{\rm diam\,}}

\newcommand{\real}{{\mathbb R}}

\newtheorem{thm}{Theorem}[section]

\newtheorem{cor}[thm]{Corollary}
\newtheorem{lem}[thm]{Lemma}
\newtheorem{prop}[thm]{Proposition}

\newtheorem{claim}{Claim}[section]
\newtheorem{subclaim}{Subclaim}
\newtheorem{conj}[equation]{Conjecture}
\newtheorem{case}{Case}[section]
\newtheorem*{mysolution}{Solution}
\newtheorem{step}{Step}[section]
\theoremstyle{definition}
\newtheorem{defn}[thm]{Definition}
\newtheorem{example}[thm]{Example}
\newtheorem{prob}[equation]{Problem}
\newtheorem{ques}[equation]{Question}
\newtheorem{rem}[thm]{Remark}
\newtheorem{rems}{Remarks}[section]
\newcounter {own}
\def\theown {\thesection       .\arabic{own}}

\newenvironment{pf}[1][]{%
 \vskip 3mm
 \noindent
 \ifthenelse{\equal{#1}{}}%
  {{\slshape Proof. }}%
  {{\slshape #1.} }%
 }%
{\qed\bigskip}

\newcounter{alphabet}

\newenvironment{Thm}[1][]{\refstepcounter{alphabet}%
\bigskip%
\noindent%
{\bf Theorem \Alph{alphabet}}%
\ifthenelse{\equal{#1}{}}{}{ (#1)}%
{\bf .} \itshape}{\vskip 8pt}

\makeatletter

\newcommand{\id}{{\operatorname{id}}}

\newcommand{\RNum}[1]{\uppercase\expandafter{\romannumeral #1\relax}}

\def\be{\begin{equation}}
\def\ee{\end{equation}}

\newcommand{\ben}{\begin{enumerate}}
\newcommand{\een}{\end{enumerate}}

\newcommand{\blem}{\begin{lem}}
\newcommand{\elem}{\end{lem}}
\newcommand{\bthm}{\begin{thm}}
\newcommand{\ethm}{\end{thm}}
\newcommand{\bcor}{\begin{cor}}
\newcommand{\ecor}{\end{cor}}
\newcommand{\beg}{\begin{examp}}
\newcommand{\eeg}{\end{examp}}
\newcommand{\begs}{\begin{examples}}
\newcommand{\eegs}{\end{examples}}
\newcommand{\bdefe}{\begin{defn}}
\newcommand{\edefe}{\end{defn}}
\newcommand{\bprob}{\begin{prob}}
\newcommand{\eprob}{\end{prob}}
\newcommand{\bques}{\begin{ques}}
\newcommand{\eques}{\end{ques}}
\newcommand{\bei}{\begin{itemize}}
\newcommand{\eei}{\end{itemize}}
\newcommand{\bcl}{\begin{claim}}
\newcommand{\ecl}{\end{claim}}
\newcommand{\bscl}{\begin{subclaim}}
\newcommand{\escl}{\end{subclaim}}
\newcommand{\bca}{\begin{case}}
\newcommand{\eca}{\end{case}}
\newcommand{\bstep}{\begin{step}}
\newcommand{\estep}{\end{step}}
\newcommand{\bsol}{\begin{mysolution}}
\newcommand{\esol}{\end{mysolution}}
\newcommand{\bcon}{\begin{conj}}
\newcommand{\econ}{\end{conj}}
\newcommand{\bcons}{\begin{conjs}}
\newcommand{\econs}{\end{conjs}}
\newcommand{\bprop}{\begin{prop}}
\newcommand{\eprop}{\end{prop}}
\newcommand{\br}{\begin{rem}}
\newcommand{\er}{\end{rem}}
\newcommand{\brs}{\begin{rems}}
\newcommand{\ers}{\end{rems}}
\newcommand{\bo}{\begin{obser}}
\newcommand{\eo}{\end{obser}}
\newcommand{\bos}{\begin{obsers}}
\newcommand{\eos}{\end{obsers}}

\newcommand{\bpf}{\begin{pf}}
\newcommand{\epf}{\end{pf}}

\newcommand{\ba}{\begin{array}}
\newcommand{\ea}{\end{array}}
\newcommand{\beq}{\begin{eqnarray}}
\newcommand{\beqq}{\begin{eqnarray*}}
\newcommand{\eeq}{\end{eqnarray}}
\newcommand{\eeqq}{\end{eqnarray*}}

\begin{document}
\title{\Large\bf Locally biH\"{o}lder continuous mappings and their induced embeddings between Besov spaces
 \footnotetext{\hspace{-0.35cm}
  $2020$ {\it Mathematics Subject classfication}: 30L10, 46E36. 
 \endgraf{{\it Key words and phases}: Locally biH\"{o}lder continuous mapping, (power) quasisymmetric mapping, Besov space, Ahlfors regular metric space, embedding, uniform boundedness.}
}
}
\author{Manzi Huang, Xiantao Wang, Zhuang Wang, and Zhihao Xu}
\date{ }
\maketitle

\begin{abstract}
	In this paper, we introduce a class of homeomorphisms between metric spaces, which are locally biH\"{o}lder continuous mappings. Then an embedding result between Besov spaces induced by locally  biH\"{o}lder continuous mappings between Ahlfors regular spaces is established, which extends the corresponding result of  Bj\"{o}rn-Bj\"{o}rn-Gill-Shanmugalingam (J. Reine Angew. Math. 725: 63-114, 2017). Furthermore, an example is constructed to show that our embedding result is more general. We also introduce a geometric condition, named as uniform boundedness, to characterize when a quasisymmetric mapping between uniformly perfect spaces is locally biH\"{o}lder continuous.
\end{abstract}

 \section{Introduction}
 For nearly three decades, the analysis on metric measure spaces has been under active study, e.g., \cite{BB11,BBS,BBS03,H03,HP,H,HK,HKST}. Given a metric measure space $(Z, d_Z, \nu_Z)$, many function spaces defined on this space have been well established, e.g., Sobolev spaces, Besov spaces and Triebel-Lizorkin spaces (see \cite{N00,H96,GKS10,BP03,KYZ,KYZ10,GKZ13} and the references therein).

 Given a homeomorphism $f$ between metric spaces $(Z,d_Z)$ and $(W,d_W)$, one natural question is that what kind of correspondence between certain function spaces on $(Z,d_Z, \nu_Z)$ and $(W,d_W, \nu_W)$ can be induced by $f$.
  The question has been extensively studied for many function spaces when $f$ is a quasiconformal or quasisymmetric mapping, including  Sobolev spaces, Besov spaces, Triebel-Lizorkin spaces and other related function spaces (see \cite{BBGS,BoSi, HKM92,Rie,KYZ,HenK,Vo,KXZZ} and the references therein).

 In a very recent work by Bj\"{o}rn-Bj\"{o}rn-Gill-Shanmugalingam \cite{BBGS}, the question was studied when the homeomorphism $f$ is a biH\"{o}lder continuous mapping and the underlying spaces are bounded Ahlfors regular spaces $(Z,d_Z, \nu_Z)$ and $(W,d_W, \nu_W)$. It was shown that $f$ induces an embedding between Besov spaces $B^{s}_{p, p}(W)$ and $B^{ s^\prime}_{p, p}(Z)$ for suitable $s$ and $s^\prime$, via composition; see \cite[Proposition 7.2]{BBGS} for the details. Recall that for $\theta_1>0$ and $\theta_2>0$, a homeomorphism $f: (Z, d_Z)\to (W, d_W)$ is called $(\theta_1, \theta_2)$-{\it biH\"{o}lder continuous} if there exists a constant $C\geq1$ such that  for all $x, y\in Z$,
 \begin{equation}\label{defn-biholder-intro}
 	C^{-1}d_Z(x, y)^{\theta_1}\leq d_W(f(x), f(y))\leq Cd_Z(x, y)^{\theta_2}.
 \end{equation}
 In particular, if $\theta_1=\theta_2$, then  $f$ is called a {\it snowflake mapping}.

It is interesting to ask what can remain of the conclusion of \cite[Proposition 7.2]{BBGS} if the assumption that the underlying metric spaces $(Z, d_Z)$ and $(W, d_W)$ are bounded is removed.
	As the first purpose of this paper, we consider this question.
	However, the assumption of boundedness on the underlying spaces plays a key role, since for a biH\"{o}lder continuous mapping $f: (Z, d_Z)\to (W, d_W)$, if $(Z, d_Z)$ is Ahlfors regular and unbounded, then $f$ must be a snowflake mapping. To avoid such a constraint, let us introduce the following class of mappings.
Before the statement of the definition, we make the following conventions: $(1)$ For a subset $A$ of $(Z, d_Z)$, we use $\diam A$ to denote the diameter of $A$, that is, $\diam A=\sup\{d_Z(z_1, z_2): z_1, z_2\in A\}$. $(2)$ All metric spaces involved in this paper are assumed to contain at least two points. $(3)$ When $(Z, d_Z)$ is unbounded, we take $\diam Z=\infty$. Then for any metric space $(Z, d_Z)$, $0<\diam Z\leq \infty$.

  \begin{defn}\label{LbHC}
 	Let $\theta_1>0$, $\theta_2>0$ and $0<r<2 \diam Z$.  A homeomorphism $f: (Z, d_Z)\to (W, d_W)$ is called {\it locally $(\theta_1, \theta_2, r)$-biH\"{o}lder continuous} if every pair of points $x, y\in Z$ satisfies the condition \eqref{defn-biholder-intro} provided that $d_Z(x, y)<r$. Also, the constant $C$ in \eqref{defn-biholder-intro} is called a {\it locally biH\"{o}lder continuity coefficient} of $f$.
 \end{defn}

Obviously, every biH\"{o}lder continuous mapping is locally biH\"{o}lder continuous, while the converse is not true (See Example \ref{ex} below). The following are direct consequences of the definitions.

\bprop\label{1-8-8}
$(1)$ If $f$ is $(\theta_1, \theta_2)$-biH\"{o}lder continuous with a biH\"{o}lder continuity coefficient $C_1$, then the inverse $f^{-1}$ of $f$ is $(1/\theta_2, 1/\theta_1)$-biH\"{o}lder continuous with a biH\"{o}lder continuity coefficient $C_2=\max\{C_1^{1/ \theta_1},\; C_1^{1/ \theta_2}\}$.

$(2)$ If $f$ is locally $(\theta_3, \theta_4, r)$-biH\"{o}lder continuous with a locally biH\"{o}lder continuity coefficient $C_3$, then the inverse $f^{-1}$ of $f$ is locally $(1/\theta_4, 1/\theta_3, C_3^{-1}r^{\theta_3})$-biH\"{o}lder continuous with a locally biH\"{o}lder continuity coefficient $C_4=\max\{C_3^{1/ \theta_3},\; C_3^{1/ \theta_4}\}$.
\eprop

The following result is our answer to the aforementioned question, which provides us with embeddings between Besov spaces induced by locally biH\"{o}lder continuous mappings.

\begin{thm}\label{thm-1}
	Assume that $(Z, d_Z, \nu_Z)$ and $(W, d_W, \nu_W)$ are Ahlfors $Q_Z$-regular and Ahlfors $Q_W$-regular spaces with $Q_Z>0$ and $Q_W>0$, respectively, and let $\theta_1>0$, $\theta_2>0$, $s>0$, $s^\prime>0$ and $p\geq 1$
be constants such that
	\begin{equation}\label{s-s-relation}
	Q_Z\geq \theta_1 Q_W\;\;\mbox{and}\;\;	s^\prime\leq \theta_2 s+\frac{\theta_2 Q_W-Q_Z}{p}.
	\end{equation}
 Suppose that $f: (Z, d_Z)\rightarrow (W, d_W)$ is a locally $(\theta_1, \theta_2, r)$-biH\"{o}lder continuous mapping with $0<r<2\,\diam Z$. Then $f$ induces a canonical bounded  embedding $f_{\#}: B^{s}_{p, p}(W)\rightarrow B^{ s^\prime}_{p, p}(Z)$ via composition.
\end{thm}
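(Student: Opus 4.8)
My plan is to use the Gagliardo-type description of $B^{s}_{p,p}$ valid on an Ahlfors $Q$-regular space $(Y,d_{Y},\nu_{Y})$, namely
\[
\|u\|_{B^{s}_{p,p}(Y)}^{p}\ \approx\ \int_{Y}|u|^{p}\,d\nu_{Y}\ +\ \int_{Y}\int_{Y}\frac{|u(\xi)-u(\eta)|^{p}}{d_{Y}(\xi,\eta)^{sp+Q}}\,d\nu_{Y}(\xi)\,d\nu_{Y}(\eta),
\]
and to read $f_{\#}u:=u\circ f$, so that everything reduces to $\|u\circ f\|_{B^{s'}_{p,p}(Z)}\lesssim\|u\|_{B^{s}_{p,p}(W)}$. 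I would split the double integral defining the seminorm of $u\circ f$ on $Z\times Z$ at the scale $r$ into a near-diagonal piece $\{d_{Z}(x,y)<r\}$, where the local biH\"older hypothesis is in force, and a far piece $\{d_{Z}(x,y)\ge r\}$. Because $s'>0$, Ahlfors $Q_{Z}$-regularity makes the tail $\int_{\{d_{Z}(x,y)\ge r\}}d_{Z}(x,y)^{-(s'p+Q_{Z})}\,d\nu_{Z}(y)$ summable over dyadic annuli (it is $\lesssim r^{-s'p}$), so the far piece is dominated by $r^{-s'p}\|u\circ f\|_{L^{p}(Z)}^{p}$; thus the far piece and the $L^{p}$ term of the target norm both come down to controlling $\|u\circ f\|_{L^{p}(Z)}^{p}=\int_{W}|u|^{p}\,d\mu$, where $\mu:=f_{*}\nu_{Z}$ is the push-forward.

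For the near-diagonal piece I would feed in local biH\"older only through its upper half $d_{W}(f(x),f(y))\le C\,d_{Z}(x,y)^{\theta_{2}}$, which bounds the decreasing kernel,
\[
d_{Z}(x,y)^{-(s'p+Q_{Z})}\ \le\ C^{(s'p+Q_{Z})/\theta_{2}}\,d_{W}(f(x),f(y))^{-(s'p+Q_{Z})/\theta_{2}},
\]
turning the near piece into an integral with kernel exponent $\beta:=(s'p+Q_{Z})/\theta_{2}$ over $\{d_{Z}(x,y)<r\}$, whose $f$-image sits inside $\{d_{W}(f(x),f(y))<Cr^{\theta_{2}}\}$. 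Here the second condition in \eqref{s-s-relation} enters exactly: it is equivalent to $s'p+Q_{Z}\le\theta_{2}(sp+Q_{W})$, i.e.\ to $\beta\le sp+Q_{W}$, with equality precisely on the boundary of \eqref{s-s-relation}. Since the $W$-distances in play are bounded by $Cr^{\theta_{2}}$, the inequality $\beta\le sp+Q_{W}$ lets me absorb the gap into a fixed power of $r$ and replace $\beta$ by the \emph{critical} exponent $sp+Q_{W}$; note that $d_{W}(f(x),f(y))^{-(sp+Q_{W})}\approx d_{W}(f(x),f(y))^{-sp}\,\nu_{W}(B_{W}(f(x),d_{W}(f(x),f(y))))^{-1}$ is exactly the kernel of the intrinsic $B^{s}_{p,p}(W)$ seminorm. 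The near piece is thereby reduced to dominating
\[
\int_{Z}\int_{\{d_{Z}(x,y)<r\}}\frac{|u(f(x))-u(f(y))|^{p}}{d_{W}(f(x),f(y))^{sp+Q_{W}}}\,d\nu_{Z}(y)\,d\nu_{Z}(x)
\]
by $\|u\|_{B^{s}_{p,p}(W)}^{p}$.

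The remaining, and decisive, ingredient is a change of variables through $f$, where the first condition $Q_{Z}\ge\theta_{1}Q_{W}$ is used. Fixing $x$ and writing $w=f(x)$, the inner integral runs over $y\in B_{Z}(x,r)$; for such $y$ the lower half $C^{-1}d_{Z}(x,y)^{\theta_{1}}\le d_{W}(f(x),f(y))$ of local biH\"older gives $f^{-1}(B_{W}(w,t))\cap B_{Z}(x,r)\subseteq B_{Z}(x,(Ct)^{1/\theta_{1}})$, whence, by Ahlfors $Q_{Z}$-regularity of $Z$,
\[
\mu\big(B_{W}(w,t)\big)\ \lesssim\ t^{Q_{Z}/\theta_{1}}\ \lesssim\ t^{Q_{W}}\qquad(t\ \text{small}),
\]
the last step using $Q_{Z}/\theta_{1}\ge Q_{W}$. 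Thus at small scales $\mu=f_{*}\nu_{Z}$ charges balls no more than the $Q_{W}$-regular measure $\nu_{W}$ does, so a bounded-overlap covering argument converts the (local) $\mu$-integrals above into $\nu_{W}$-integrals; carrying this out for the inner and then the outer variable returns the intrinsic $B^{s}_{p,p}(W)$ seminorm of $u$ and closes the near estimate, while the same ball bound applied once yields the trace inequality $\int_{W}|u|^{p}\,d\mu\lesssim\|u\|_{B^{s}_{p,p}(W)}^{p}$ needed for the far and $L^{p}$ parts. (By Proposition~\ref{1-8-8}(2) the same estimates could be organized through $f^{-1}$, which is itself locally biH\"older.)

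I expect the main obstacle to be exactly this change of variables, because $\mu=f_{*}\nu_{Z}$ is in general \emph{not} comparable to $\nu_{W}$: the ball bound $\mu(B_{W}(w,t))\lesssim t^{Q_{Z}/\theta_{1}}$ is only valid locally (preimages that are far apart in $Z$ but close in $W$ are not controlled by the pointwise biH\"older inequalities), and $\mu$ is only upper-regular, not two-sided Ahlfors regular, once $\theta_{1}\ne\theta_{2}$. The care therefore goes into a trace/restriction estimate dominating $\mu$-integrals of the critical Besov kernel by the corresponding $\nu_{W}$-integrals, using the doubling of $\nu_{W}$ to produce bounded-overlap coverings at each dyadic scale and summing the resulting geometric series, which converges precisely because $Q_{Z}/\theta_{1}\ge Q_{W}$ and $s'>0$. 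In this way the two hypotheses in \eqref{s-s-relation} play cleanly separated roles: $Q_{Z}\ge\theta_{1}Q_{W}$ is the \emph{measure} condition guaranteeing the push-forward comparison, while $s'\le\theta_{2}s+(\theta_{2}Q_{W}-Q_{Z})/p$ is the \emph{smoothness} condition placing the transformed kernel exponent at or below the critical value $sp+Q_{W}$. Once these norm bounds are in hand, linearity, the composition identity, and boundedness of $f_{\#}$ are immediate.
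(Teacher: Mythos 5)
Your proposal is correct modulo its one flagged gap and follows essentially the same route as the paper: your near/far splitting at scale $r$ is the continuous version of the paper's Lemma \ref{lem-besov-norm} (which absorbs all scales $\ge r$ into the $L^p$ term), your kernel conversion via the upper H\"older bound together with the equivalent form $s'p+Q_Z\le\theta_2(sp+Q_W)$ of \eqref{s-s-relation} is the paper's scale-by-scale estimate of $I_n$, and the change-of-variables fact $f_*\nu_Z\lesssim\nu_W$ that you identify as the decisive ingredient is exactly the paper's Lemma \ref{embedding-lp}, applied there (as in your plan) once in the inner and once in the outer variable. The step you single out as the main obstacle is precisely the step the paper also does not prove but delegates to the method of \cite[Lemma 7.1]{BBGS}, so your reduction reproduces the paper's proof in full.
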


The terminology appeared in Theorem \ref{thm-1} and in the rest of this section will be introduced in Section \ref{sec-2} unless stated otherwise.

\br
Theorem \ref{thm-1} is a generalization of \cite[Proposition 7.2]{BBGS}. This is because when $(Z, d_Z)$ is bounded and $r>\diam Z$, Theorem \ref{thm-1} coincides with \cite[Proposition 7.2]{BBGS}. Further, Example \ref{ex} and Remark \ref{sec5}$(ii)$ below show that Theorem \ref{thm-1} is more general than \cite[Proposition 7.2]{BBGS}.
\er

As we know, a quasisymmetric mapping between bounded uniformly perfect spaces is locally biH\"{o}lder continuous since it follows from \cite[Theorem 3.14]{TV} or \cite[Corollary 11.5]{H} that it is biH\"{o}lder continuous.
 Naturally, one will ask if there is any analog for the case when the underlying spaces are unbounded. However, Example \ref{ex-add} below shows that not every quasisymmetric mapping between unbounded uniformly perfect spaces is locally biH\"{o}lder continuous.
As the second purpose of this paper, we seek for a characterization for a quasisymmetric mapping to be locally biH\"{o}lder continuous.
Before the statement of our result, let us introduce the following concept.

 \begin{defn}
 	For $0<r< 2 \diam Z$, a homeomorphism $f: (Z, d_Z)\to (W, d_W)$ is called $r$-{\it uniformly bounded} if there exist constants $a$ and $b$ with $0<a<b$ such that for all $x\in Z$,
 	$$a \leq \diam f\big(B(x, r)\big)\leq b,$$ where $B(x, r)=\{y\in Z:\; d_Z(y, x)<r\}$, i.e., the open ball in $Z$ with center $x$ and radius $r$.
  \end{defn}

The following property shows that in the definition of $r$-uniform boundedness, the exact value of the parameter $r$ is not important for quasisymmetric mappings.

\begin{prop}\label{uniformly-bounded}
Suppose that $(Z, d_Z)$ is a $\kappa$-uniformly perfect space with $\kappa>1$ and $f: (Z, d_Z)\to (W, d_W)$ is $\eta$-quasisymmetric. If $f$ is $r$-uniformly bounded for an $r$ with $0<r< 2 \diam Z$, then $f$ is $s$-uniformly bounded for any $s\in (0,2\diam Z)$.
\end{prop}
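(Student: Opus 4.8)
The plan is to control $\diam f(B(x,s))$ by $\diam f(B(x,r))$ with constants depending only on $\eta$, $\kappa$ and the ratio $s/r$, never on the center $x$; this uniformity in $x$ is exactly what $s$-uniform boundedness requires. I would begin with two monotonicity observations that halve the work. Since $B(x,s)\subseteq B(x,r)$ whenever $s\le r$, the upper bound $\diam f(B(x,s))\le \diam f(B(x,r))\le b$ is free in that case; dually, when $s\ge r$ the lower bound $\diam f(B(x,s))\ge \diam f(B(x,r))\ge a$ is free. Hence it remains only to produce a uniform lower bound when $s<r$ and a uniform upper bound when $s>r$.

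The engine of the proof is a comparability estimate between the image diameter of a ball and the image of the distance from its center to a point on a ``boundary shell.'' Fix $x$ and a radius $R$ with $0<R<\diam Z$. Since $(Z,d_Z)$ is $\kappa$-uniformly perfect, there is a point $y_R$ with $R/\kappa\le d_Z(x,y_R)<R$. Trivially $d_W(f(x),f(y_R))\le \diam f(B(x,R))$, as both points lie in $f(B(x,R))$. Conversely, for every $u\in B(x,R)$ one has $d_Z(x,u)/d_Z(x,y_R)<\kappa$, so $\eta$-quasisymmetry gives $d_W(f(x),f(u))\le \eta(\kappa)\,d_W(f(x),f(y_R))$, whence the triangle inequality yields $\diam f(B(x,R))\le 2\eta(\kappa)\,d_W(f(x),f(y_R))$. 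Thus
\[
d_W(f(x),f(y_R))\;\le\;\diam f(B(x,R))\;\le\;2\eta(\kappa)\,d_W(f(x),f(y_R)).
\]

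With shell points $y_r,y_s$ chosen at scales $r,s$ (both $<\diam Z$), the two remaining bounds follow from one further use of quasisymmetry comparing $y_r$ and $y_s$ relative to $x$. For the lower bound when $s<r$: the left inequality at scale $s$ gives $\diam f(B(x,s))\ge d_W(f(x),f(y_s))$; since $d_Z(x,y_r)/d_Z(x,y_s)<\kappa r/s$, quasisymmetry gives $d_W(f(x),f(y_s))\ge d_W(f(x),f(y_r))/\eta(\kappa r/s)$; and the right inequality at scale $r$ together with $r$-uniform boundedness gives $d_W(f(x),f(y_r))\ge a/(2\eta(\kappa))$. Combining, $\diam f(B(x,s))\ge a/\big(2\eta(\kappa)\eta(\kappa r/s)\big)$. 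The upper bound when $s>r$ is the mirror image: the right inequality at scale $s$, the estimate $d_Z(x,y_s)/d_Z(x,y_r)<\kappa s/r$ through quasisymmetry, and the left inequality at scale $r$ with the bound $b$, combine to give $\diam f(B(x,s))\le 2\eta(\kappa)\eta(\kappa s/r)\,b$. Every constant here is independent of $x$.

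It remains to treat the degenerate regime in which a ball exhausts the whole space, which can occur only when $Z$ is bounded and the radius exceeds $\diam Z$, so that no uniform-perfectness shell is available. There $B(x,s)=Z$ for all $x$ and $\diam f(B(x,s))=\diam W$ is a fixed positive number; one checks $\diam W<\infty$ (for instance by a direct quasisymmetric estimate anchored at two fixed points, or directly from the given $r$-uniform boundedness), whence $s$-uniform boundedness is immediate. The main obstacle I anticipate is not any individual inequality—each is a one-line consequence of quasisymmetry and the shell—but the bookkeeping that makes the comparability estimate and this whole-space case mesh across the full range $0<r,s<2\diam Z$, in particular when the given scale $r$ lies in the whole-space regime while the target scale $s$ does not, which calls for a short separate lower-bound estimate anchored at a pair of points nearly realizing $\diam W$.
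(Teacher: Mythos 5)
Your individual quasisymmetry estimates are all correct, and your basic strategy (monotonicity in the easy direction, shell points from uniform perfectness, quasisymmetry in the hard direction) is the same one the paper uses. The genuine gap is in how you draw the line between the ``shell available'' and ``whole-space'' regimes. You claim that for every center $x$ and every radius $R<\diam Z$, uniform perfectness supplies a point $y_R$ with $R/\kappa\le d_Z(x,y_R)<R$, and that a ball can exhaust $Z$ ``only when $Z$ is bounded and the radius exceeds $\diam Z$.'' Both claims are false: uniform perfectness only applies when $Z\setminus B(x,R)\neq\emptyset$, and this can fail for radii well below $\diam Z$, in a \emph{center-dependent} way. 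Concretely, take $Z=[0,2]$ (which is $\kappa$-uniformly perfect for every $\kappa>1$), $\kappa=1.1$, $x=1$, $R=1.5<\diam Z$: then $B(x,R)=Z$, yet no point of $Z$ lies at distance $\ge R/\kappa\approx 1.36$ from $x$, so no shell point $y_R$ exists. In this situation your lower-bound chain for $s<r$, which passes through $d_W(f(x),f(y_r))$, has no $y_r$ to anchor to, and your fallback for the whole-space case is never triggered, since you invoke it only for radii exceeding $\diam Z$. The mirror problem occurs for the upper bound when $s>r$ and $B(x,s)=Z$ with $s<\diam Z$: then $y_s$ does not exist. So, as written, the proof leaves a genuine set of centers uncovered.

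The repair --- and this is exactly how the paper structures its proof --- is to make the dichotomy per center rather than per radius, and to use only \emph{one} shell point, at the \emph{smaller} scale. For each fixed $x$: either $B(x,\rho_1)\setminus B(x,\rho_2)=\emptyset$ (where $\rho_2\le\rho_1$ are the two radii in play), in which case the two image diameters coincide and monotonicity plus $r$-uniform boundedness finishes; or this set is nonempty, in which case $Z\setminus B(x,\rho_2)\neq\emptyset$, so uniform perfectness legitimately produces $z$ with $\rho_2/\kappa\le d_Z(x,z)<\rho_2$, and one compares every point $y$ of the \emph{larger} ball directly to $z$: from $d_Z(x,y)/d_Z(x,z)<\kappa\rho_1/\rho_2$ and quasisymmetry, $d_W(f(x),f(y))\le \eta(\kappa\rho_1/\rho_2)\,\diam f\big(B(x,\rho_2)\big)$, which gives both the needed upper bound (taking $\rho_1=s$, $\rho_2=r$) and, read in reverse, the needed lower bound (taking $\rho_1=r$, $\rho_2=s$). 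This direct comparison never requires a shell point at the larger scale, so the degenerate cases vanish; your two-shell ``comparability engine'' is tidy, but it is precisely what creates the need for a shell at the larger scale and hence the uncovered cases.
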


Note that quasisymmetry in a uniformly perfect space implies power quasisymmetry (See Theorem $A$ below).
Based on the uniform boundedness, we obtain the following geometric characterization for a (power) quasisymmetric mapping between unbounded uniformly perfect spaces to be locally biH\"{o}lder continuous.

\begin{thm}\label{thm-2}
Suppose that $(Z, d_Z)$ is $\kappa$-uniformly perfect with $\kappa>1$, and $f: (Z, d_Z)\to (W, d_W)$ is a $(\theta, \lambda)$-power quasisymmetric mapping with $\theta\geq 1$ and $\lambda\geq 1$. Then for any $r\in (0,2 \diam Z)$, the following are quantitatively equivalent: \ben
\item[$(1)$]
$f$ is locally $(\theta, 1/\theta, r)$-biH\"{o}lder continuous.
\item[$(2)$]
$f$ is $r$-uniformly bounded.
\een
\end{thm}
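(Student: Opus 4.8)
The plan is to prove the two implications separately, in both cases using $\kappa$-uniform perfectness to produce, for each center $x$, a reference point $z\in B(x,r)$ at scale comparable to $r$ (say $r/(2\kappa)\le d_Z(x,z)<r$), and then to exploit the principle that power quasisymmetry controls \emph{ratios} of distances while hypothesis (1) (resp. (2)) supplies the missing \emph{absolute} scale. Throughout I would write the control function as $\eta(t)=\lambda\max\{t^\theta, t^{1/\theta}\}$, so that $\eta(t)=\lambda t^{1/\theta}$ for $t\le 1$ and $\eta(t)=\lambda t^{\theta}$ for $t\ge 1$; the matching of the two biHölder exponents $\theta$ and $1/\theta$ with the two branches of this maximum is exactly what makes the statement go through. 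Since $r$ is fixed throughout, I can work at a single scale, but Proposition~\ref{uniformly-bounded} is available should I wish to adjust the radius in the uniform boundedness condition.

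For (1)$\Rightarrow$(2), the upper bound is immediate: for $u,v\in B(x,r)$ the upper biHölder estimate centered at $x$ gives $d_W(f(x),f(u)),\, d_W(f(x),f(v))\le Cr^{1/\theta}$, whence $\diam f(B(x,r))\le 2Cr^{1/\theta}=:b$. For the lower bound I would invoke uniform perfectness to find $z\in B(x,r)$ with $d_Z(x,z)\ge r/(2\kappa)$; the lower biHölder estimate then yields $d_W(f(x),f(z))\ge C^{-1}(r/(2\kappa))^{\theta}=:a$, so $\diam f(B(x,r))\ge a$. Both $a$ and $b$ depend only on $C,\theta,\kappa,r$, which is the quantitative content.

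For (2)$\Rightarrow$(1), fix $x,y$ with $d_Z(x,y)<r$ and choose the reference point $z\in B(x,r)$ with $d_Z(x,z)$ comparable to $r$ as above. Uniform boundedness gives at once the upper estimate $d_W(f(x),f(z))\le\diam f(B(x,r))\le b$. The delicate point is the matching \emph{lower} estimate $d_W(f(x),f(z))\ge a'$ for this specific $z$: uniform boundedness only asserts that the diameter of $f(B(x,r))$ is at least $a$, i.e. that \emph{some} pair $u,v\in B(x,r)$ has $d_W(f(u),f(v))\ge a/2$. I would use the triangle inequality to pass to a point $w\in\{u,v\}$ with $d_W(f(x),f(w))\ge a/4$, and then compare $w$ with $z$ through quasisymmetry: since $d_Z(x,w)/d_Z(x,z)\le 2\kappa$, the bound $d_W(f(x),f(w))\le\eta(2\kappa)\,d_W(f(x),f(z))$ converts the diametral information into the pointwise lower bound $d_W(f(x),f(z))\ge a/(4\eta(2\kappa))=:a'$. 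With the reference point now satisfying $a'\le d_W(f(x),f(z))\le b$ and $d_Z(x,z)$ comparable to $r$, both biHölder inequalities follow from power quasisymmetry applied to the triple $(x,y,z)$: comparing $d_W(f(x),f(y))$ with $d_W(f(x),f(z))$ gives the upper estimate, while comparing in the reversed order, via $d_W(f(x),f(z))\le\eta(d_Z(x,z)/d_Z(x,y))\,d_W(f(x),f(y))$, gives the lower estimate. When $d_Z(x,y)$ is small relative to $d_Z(x,z)\asymp r$ the relevant ratio is $\le 1$, so the $t^{1/\theta}$ branch of $\eta$ delivers the upper exponent $1/\theta$ and the $t^{\theta}$ branch the lower exponent $\theta$; when $d_Z(x,y)$ is comparable to $r$ all quantities are trapped between fixed constants and the estimates hold trivially after enlarging $C$.

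The main obstacle is precisely this conversion in (2)$\Rightarrow$(1) of a diametral lower bound into a pointwise lower bound on $d_W(f(x),f(z))$ measured from the center, since uniform boundedness is an a priori weaker hypothesis about the size of the image ball rather than about distances to $f(x)$; the triangle-inequality-plus-quasisymmetry step is the crux. A secondary, purely bookkeeping matter is the choice of scale for $z$ near the boundary of $B(x,r)$ when $\diam Z\le r<2\,\diam Z$, where uniform perfectness must be applied at a scale slightly below $\diam Z$ rather than at $r$; this costs only a further factor of $2$ and leaves the quantitative equivalence intact.
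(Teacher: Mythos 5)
Your proposal is correct and takes essentially the same route as the paper's own proof: uniform perfectness supplies a reference point at scale comparable to $r$ (the paper's Lemma \ref{1-8-4}, with $\mu=\max\{8,\kappa\}$ playing the role of your $2\kappa$), and the $(1)\Rightarrow(2)$ direction is the same two-line estimate. Your crux step in $(2)\Rightarrow(1)$ --- triangle inequality plus a quasisymmetry comparison to convert the diametral lower bound into the pointwise bound $d_W(f(x),f(z))\gtrsim a$ --- is precisely the paper's assertion \eqref{lemma-bdd} (the paper absorbs the triangle-inequality step into choosing $\zeta_1$ with $d_W(f(x),f(\zeta_1))\geq \tfrac13\diam f(B(x,r))$), after which both arguments conclude with the same two-case analysis according to whether $d_Z(x,y)$ is smaller than or comparable to $d_Z(x,z)$.
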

Here, for two conditions, we say that Condition $\Phi$ quantitatively implies Condition $\Psi$ if Condition $\Phi$ implies Condition $\Psi$ and the data of Condition $\Psi$ depends only on that of Condition $\Phi$. If Condition $\Psi$ also quantitatively implies Condition $\Phi$, then we say that Condition $\Phi$ is equivalent to Condition $\Psi$, quantitatively.

\br
In Theorem \ref{thm-2}, the assumption that $(Z, d_Z)$ is uniformly perfect cannot be removed. For example, the identity mapping of integers $\id: \mathbb Z\rightarrow \mathbb Z$ with the standard Euclidean distance is $1$-biLipschitz, and thus, it is power quasisymmetric, and locally $(\theta_1,\theta_2,r)$-biH\"{o}lder continuous for any $\theta_1>0$, $\theta_2>0$ and $0<r<1$.
 However, it is not $s$-uniformly bounded for any $s\in (0,1)$.
 \er

Throughout this paper, the letter $C$ (sometimes with a subscript) denotes a positive constant that depends only on the given parameters of the spaces and may change at different occurrences. The notation $A\lesssim B$ (resp. $A \gtrsim B$) means that there is a constant $C_1\geq 1$ (resp. $C_2\geq 1$) such that $A \leq C_1 \cdot B$ (resp. $A \geq C_2 \cdot B).$ We also call $C_1$ and $C_2$ comparison coefficients of $A$ and $B$. In particular, $C_1$ (resp. $C_2$) is called an upper comparison coefficient (resp. a lower comparison coefficient) for $A$ and $B$. If $A\lesssim B$ and $A \gtrsim B$, then we write $A\approx B$.

The paper is organized as follows. In Section \ref{sec-2}, some basic concepts and known results will be introduced. Section \ref{sec-3} will be devoted to the proof of Theorem \ref{thm-1}. In Section \ref{sec-4}, the proofs of Proposition \ref{uniformly-bounded} and Theorem \ref{thm-2}
will be presented, and in Section \ref{sec-5}, two examples will be constructed.

\section{Basic terminologies}\label{sec-2}
In this section, we introduce some necessary notions and notations.

A metric space $(Z, d_Z)$ is called $\kappa$-{\it uniformly perfect} with $\kappa> 1$ if for each $x\in Z$ and for each $r>0$, the set $B(x, r)\setminus B(x, r/\kappa)$ is nonempty whenever the set $Z\setminus B(x, r)$ is nonempty. Sometimes, $(Z, d_Z)$ is called {\it uniformly perfect} if $Z$ is $\kappa$-{\it uniformly perfect} for some $\kappa > 1$.

\blem\label{1-8-4}
Suppose that $(Z, d_Z)$ is $\kappa$-uniformly perfect with $\kappa>1$, and let $x\in Z$. Then for any $r\in(0,2\diam Z)$, there exists $z\in Z$ such that $$\frac{r}{\mu}\leq d_Z(x, z)<r,$$ where $\mu=\max\{8, \kappa\}$.
\elem
\bpf Let $x\in Z$.
Since $0<r<2\diam Z$, we see that there exists $y\in Z$ such that $d_Z(x, y)>r/8$. If $d_Z(y, x)< r$, by letting $z=y$, we see that the lemma is true. If $d_Z(y, x)\geq r$, then the uniform perfectness of $(Z,d_Z)$ implies that there is $y^\prime\in Z$ such that $r/\kappa\leq d_Z(x, y^\prime)<r$. By letting $z=y^\prime$, we know that the lemma holds true as well.
\epf

A homeomorphism $f: (Z, d_Z)\to (W, d_W)$ is called {\it $\eta$-quasisymmetric} if there exists a self-homeomorphism $\eta$ of $[0, +\infty)$ such that for all triples of points $x, y, z\in Z$,
	\begin{equation}\label{eta}
	\frac{d_W(f(x), f(z))}{d_W(f(y), f(z))}\leq \eta\left(\frac{d_Z(x, z)}{d_Z(y, z)}\right).
	\end{equation}
	In particular, if there are constants $\theta\geq 1$ and $\lambda\geq 1$ such that
	\begin{equation*}\label{eq-1.1}
		\eta_{\lambda, \theta}(t)=
		\left\{\begin{array}{cl}
			\lambda t^{\frac{1}{\theta}}& \text{for} \;\; 0<t<1, \\
			\lambda t^{\theta}& \text{for} \;\; t\geq 1,
		\end{array}\right.
	\end{equation*}
	then $f$ is called a $(\theta, \lambda)$-{\it power quasisymmetric mapping}. Here, the notation $\eta_{\lambda, \theta}$ means that the control function $\eta$ depends only on the given parameters $\theta$ and $\lambda$.
	
\begin{Thm}[{\cite[Theorem 11.3]{H}}]\label{Thm-A}
An $\eta$-quasisymmetric mapping of a uniformly perfect space is $(\theta, \lambda)$-power quasisymmetric, quantitatively.
\end{Thm}
	
In the following, we always use the notation $(Z, d_Z, \nu_Z)$ to denote a metric space $(Z, d_Z)$ admitting a Borel regular measure $\nu_Z$. A metric measure space $(Z, d_Z, \nu_Z)$ is called
 \ben
 \item[$(1)$]
 {\it doubling} if there exists a constant $C\geq 1$ such that for all $x\in Z$ and $0<r<2\diam Z$,
$$0< \nu_Z(B(x, 2r))\leq C \nu_Z\big(B(x, r)\big)<\infty.$$

\item[$(2)$]
 {\it $Q_Z$-Ahlfors regular} with $Q_Z>0$ if there exists a constant $C\geq 1$ such that for all $z\in Z$ and $0<r<2 \diam Z$,
$$
C^{-1} r^{Q_Z}\leq \nu_Z\Big(B(z, r)\big) \leq C r^{Q_Z}.
$$
\een

It is known that every Ahlfors regular space is doubling and uniformly perfect   (cf. \cite[Section 11]{H}).

For  given $1\leq p<\infty$, $s>0$  and a function $u: Z\to\mathbb R$, the {\it homogeneous Besov norm} on the metric measure space $(Z, d_Z, \nu_Z)$ is defined by
\begin{equation}\label{eq-besov}
	\|u\|_{\dot B_{p, p}^{s}(Z)}=\left(\int_Z \int_Z\frac{|u(x)-u(y)|^p}{d_Z(x, y)^{sp}}\frac{d\nu_Z(x)\nu_Z(y)}{\nu_Z(B(x, d_Z(x, y)))}\right)^{1/p}.
\end{equation}

We write the {\it homogeneous Besov space} $\dot B_{p, p}^{s}(Z)$ for the subspace of $L^p_{\text{loc}}(Z)$ consisting of all functions $u$ such that $$\|u\|_{\dot B_{p, p}^{s}(Z)}<\infty.$$

 We note that, properly speaking, \eqref{eq-besov} is actually a seminorm on $L^p_{\text{loc}}(Z)$ since any constant function has Besov norm 0.
We define the {\it Besov space} $B_{p, p}^{s}(Z)$ to be the normed space of all measurable functions $u\in L^{p}(Z)$ such that
$$
\|u\|_{B_{p, p}^{s}(Z)}=\|u\|_{L^{p}(Z)}+\|u\|_{\dot B_{p,p}^{s}(Z)}<\infty.
$$

\section{Locally biH\"{o}lder continuous mappings and their induced embeddings}\label{sec-3}

The aim of this section is to prove Theorem \ref{thm-1}. Before the proof, we need some preparation which consists of the following two auxiliary lemmas.

\begin{lem}\label{lem-besov-norm}
	Suppose that $(Z, d_Z, \nu_Z)$ is Ahlfors $Q_Z$-regular with $Q_Z>0$.  Let $n_0\in \mathbb Z$, and for each $n\in \mathbb Z$, let $t_n=C\sigma^n$, where $C>0$ and $0<\sigma<1$.
For any $s>0$ and $p\geq 1$, if $u\in L^{p}(Z)$, then
	$$\|u\|^p_{B^s_{p, p}(Z)}\approx \|u\|^p_{L^p(Z)}+\sum_{n=n_0}^{+\infty} t_n^{-sp}\int_Z\vint_{B(x, t_n)}|u(x)-u(y)|^p d\nu_Z(y)d\nu_Z(x),$$
where the comparison coefficients depend on $n_0$.
\end{lem}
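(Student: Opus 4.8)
The plan is to reduce everything, via Ahlfors regularity, to a purely geometric comparison between the homogeneous Besov seminorm written as a single double integral and the discretized sum on the right-hand side, and then to absorb the resulting far-diagonal discrepancy into the $L^p$ term. Write $\beta=sp+Q_Z$. Since $\|u\|^p_{B^s_{p,p}(Z)}=\|u\|^p_{L^p(Z)}+\|u\|^p_{\dot B^s_{p,p}(Z)}$ and Ahlfors $Q_Z$-regularity gives $\nu_Z(B(x,\rho))\approx \rho^{Q_Z}$ for every $x$ and every $0<\rho<2\diam Z$, I would first record the two comparisons
\begin{equation*}
\|u\|^p_{\dot B^s_{p,p}(Z)}\approx \int_Z\int_Z\frac{|u(x)-u(y)|^p}{d_Z(x,y)^{\beta}}\,d\nu_Z(y)\,d\nu_Z(x)
\end{equation*}
and, for each $n$,
\begin{equation*}
t_n^{-sp}\int_Z\vint_{B(x,t_n)}|u(x)-u(y)|^p\,d\nu_Z(y)\,d\nu_Z(x)\approx t_n^{-\beta}\int_Z\int_{B(x,t_n)}|u(x)-u(y)|^p\,d\nu_Z(y)\,d\nu_Z(x),
\end{equation*}
whose comparison coefficients depend only on $Q_Z,s,p$. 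Thus it suffices to compare the first display with $S:=\sum_{n=n_0}^{\infty}t_n^{-\beta}\int_Z\int_{B(x,t_n)}|u(x)-u(y)|^p\,d\nu_Z(y)\,d\nu_Z(x)$, modulo the $L^p$ term.

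Next I would interchange summation and integration (Tonelli, as the integrand is nonnegative) to write $S=\int_Z\int_Z|u(x)-u(y)|^p\,\Phi(d_Z(x,y))\,d\nu_Z(y)\,d\nu_Z(x)$, where $\Phi(d)=\sum_{n\ge n_0,\ t_n>d}t_n^{-\beta}$. Because $t_n=C\sigma^n$ is decreasing in $n$, one has $\Phi(d)=0$ when $d\ge t_{n_0}$, while for $0<d<t_{n_0}$ the sum defining $\Phi(d)$ is a finite geometric progression with ratio $\sigma^{-\beta}>1$, hence comparable to its largest term $t_{m(d)}^{-\beta}$, where $t_{m(d)}$ denotes the smallest among the scales $t_n$, $n\ge n_0$, that still exceed $d$; since $d<t_{m(d)}\le d/\sigma$, this yields $\Phi(d)\approx d^{-\beta}$ with constants depending only on $\sigma,\beta$. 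Consequently
\begin{equation*}
S\approx \int_Z\int_{\{d_Z(x,y)<t_{n_0}\}}\frac{|u(x)-u(y)|^p}{d_Z(x,y)^{\beta}}\,d\nu_Z(y)\,d\nu_Z(x),
\end{equation*}
that is, $S$ is comparable to the near-diagonal part of the seminorm integral.

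The decisive step, and the one I expect to be the main obstacle, is to control the complementary far part $F:=\int_Z\int_{\{d_Z(x,y)\ge t_{n_0}\}}d_Z(x,y)^{-\beta}|u(x)-u(y)|^p\,d\nu_Z(y)\,d\nu_Z(x)$ purely by $\|u\|_{L^p(Z)}^p$; this is where the dependence of the constants on $n_0$ enters, and where one must avoid integrating $|u(x)-u(y)|^p$ over the whole (possibly infinite-measure) product space. Here I would use $|u(x)-u(y)|^p\lesssim|u(x)|^p+|u(y)|^p$ together with the uniform tail bound $\int_{\{y:\,d_Z(x,y)\ge t_{n_0}\}}d_Z(x,y)^{-\beta}\,d\nu_Z(y)\lesssim t_{n_0}^{-sp}$, valid for every $x$. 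The latter is the crux: decomposing $\{d_Z(x,y)\ge t_{n_0}\}$ into the annuli $2^k t_{n_0}\le d_Z(x,y)<2^{k+1}t_{n_0}$ and using Ahlfors regularity, the $k$-th annulus contributes $\lesssim (2^kt_{n_0})^{-\beta}(2^kt_{n_0})^{Q_Z}=2^{-ksp}t_{n_0}^{-sp}$, and these sum, the geometric series converging precisely because $sp>0$ (equivalently $\beta>Q_Z$). By symmetry in $x,y$ this gives $F\lesssim t_{n_0}^{-sp}\|u\|_{L^p(Z)}^p$, with a constant depending on $n_0$ through $t_{n_0}=C\sigma^{n_0}$.

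Finally I would assemble the estimate. For the upper bound, $\|u\|^p_{\dot B^s_{p,p}(Z)}\approx S+F\lesssim S+t_{n_0}^{-sp}\|u\|_{L^p(Z)}^p$, so $\|u\|^p_{B^s_{p,p}(Z)}\lesssim \|u\|_{L^p(Z)}^p+S$, which is the right-hand side. For the lower bound, since $F\ge 0$ one has $\|u\|^p_{\dot B^s_{p,p}(Z)}\gtrsim S$, whence $\|u\|^p_{B^s_{p,p}(Z)}\gtrsim \|u\|_{L^p(Z)}^p+S$. Combining these with the two comparisons of the first paragraph gives the asserted equivalence, with comparison coefficients depending on $n_0$ exactly as claimed.
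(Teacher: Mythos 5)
Your proof is correct, but it follows a genuinely different, more self-contained route than the paper's. The paper quotes the full two-sided discretization
\begin{equation*}
\|u\|^p_{\dot B^s_{p,p}(Z)}\approx \sum_{n=-\infty}^{+\infty} t_n^{-sp}\int_Z\vint_{B(x,t_n)}|u(x)-u(y)|^p\,d\nu_Z(y)\,d\nu_Z(x)
\end{equation*}
as following from the arguments of \cite[Theorem 5.2]{GKS10} or \cite[Lemma 5.4]{BBGS}, so the only work it actually does is to discard the coarse scales: for each $n\le n_0$ it shows $\int_Z\vint_{B(x,t_n)}|u(x)-u(y)|^p\,d\nu_Z(y)\,d\nu_Z(x)\lesssim\|u\|^p_{L^p(Z)}$ using $|u(x)-u(y)|^p\lesssim|u(x)|^p+|u(y)|^p$, the comparison $\nu_Z(B(x,t_n))\approx\nu_Z(B(y,t_n))$ for $y\in B(x,t_n)$, and Fubini, and then sums the convergent geometric series $\sum_{n\le n_0}t_n^{-sp}$. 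You instead prove the needed one-sided discretization from scratch: Tonelli plus the kernel estimate $\Phi(d)\approx d^{-\beta}\chi_{(0,t_{n_0})}(d)$ (with $\beta=sp+Q_Z$) identifies the discrete sum with the near-diagonal part of the continuous seminorm, and the far-diagonal part is absorbed into $\|u\|^p_{L^p(Z)}$ by the annular tail bound $\int_{\{d_Z(x,y)\ge t_{n_0}\}}d_Z(x,y)^{-\beta}\,d\nu_Z(y)\lesssim t_{n_0}^{-sp}$. Both arguments rest on the same absorption trick, applied by the paper to the discrete coarse-scale sum and by you to the continuous far-field integral; your version buys independence from the cited results (and makes explicit that convergence needs only $sp>0$), at the price of redoing the kernel comparison that the paper treats as known.

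One point to tighten: your per-scale comparison $t_n^{-sp}\int_Z\vint_{B(x,t_n)}\cdots\approx t_n^{-\beta}\int_Z\int_{B(x,t_n)}\cdots$ ``with constants depending only on $Q_Z,s,p$'' uses $\nu_Z(B(x,t_n))\approx t_n^{Q_Z}$, which Ahlfors regularity provides only for $t_n<2\diam Z$. Since the lemma allows an arbitrary $n_0\in\mathbb Z$, when $Z$ is bounded there may be finitely many scales with $t_n\ge 2\diam Z$, for which $\nu_Z(B(x,t_n))=\nu_Z(Z)$ can be much smaller than $t_n^{Q_Z}$ and one direction of your comparison fails. This is harmless: for such $n$ one has $B(x,t_n)=Z$, hence $t_n^{-sp}\int_Z\vint_{Z}|u(x)-u(y)|^p\,d\nu_Z(y)\,d\nu_Z(x)\lesssim t_n^{-sp}\|u\|^p_{L^p(Z)}$, which sums geometrically and is absorbed into the $L^p$ term exactly as elsewhere in your argument. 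So the conclusion stands; only the claimed uniformity of that intermediate comparison needs this small adjustment.
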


\begin{proof}
The following estimate easily follows from similar arguments as in the proof of \cite[Theorem 5.2]{GKS10} or \cite[Lemma 5.4]{BBGS}:
\beq\label{1-3-1}
\|u\|^p_{\dot B^s_{p, p}(Z)} \approx  \sum_{n=-\infty}^{+\infty} t_n^{-sp}\int_Z\vint_{B(x, t_n)}|u(x)-u(y)|^p d\nu_Z(y)d\nu_Z(x).
\eeq

Let $n_0\in \mathbb Z$. Then the estimate \eqref{1-3-1} shows that to prove the estimate in the lemma, it suffices to show that
	\begin{equation*}\label{norm-equiv}
		\sum_{n=-\infty}^{n_0} t_n^{-sp}\int_Z\vint_{B(x, t_n)}|u(x)-u(y)|^p d\nu_Z(y)d\nu_Z(x)\lesssim \|u\|^p_{L^p(Z)}.
	\end{equation*}

	Note that
	\begin{align*}
		\int_Z\vint_{B(x, t_n)}|u(x)-u(y)|^p d\nu_Z(y)d\nu_Z(x)&\lesssim \int_Z\vint_{B(x, t_n)} (|u(x)|^p+|u(y)|^p)d\nu_Z(y)d\nu_Z(x)\\
			&=\|u\|^p_{L^p(Z)}+\int_Z\vint_{B(x, t_n)} |u(y)|^pd\nu_Z(y)d\nu_Z(x).
	\end{align*}
	Since $(Z, d_Z, \nu_Z)$ is Ahlfors $Q_Z$-regular, we know that for any $y\in B(x, t_n)$, $$\nu_Z(B(x, t_n))\approx \nu_Z(B(y, t_n)).$$ It follows from the Fubini theorem that
	\begin{align*}
		\int_Z\vint_{B(x, t_n)} |u(y)|^pd\nu_Z(y)d\nu_Z(x)&\approx  \int_Z\int_Z \frac{|u(y)|^p\chi_{B(x, t_n)}(y)}{\nu_Z(B(y, t_n))} d\nu_Z(y)d\nu_Z(x)\\
		&=\int_Z |u(y)|^p d\nu_Z(y) \int_{Z}\frac{\chi_{B(x, t_n)}(y)}{\nu_Z(B(y, t_n))} d\nu_Z(x)\\
		&=\|u\|^p_{L^p(Z)}.
	\end{align*}
	Therefore,
	$$\sum_{n=-\infty}^{n_0} t_n^{-sp}\int_Z\vint_{B(x, t_n)}|u(x)-u(y)|^p d\nu_Z(y)d\nu_Z(x)\lesssim \sum_{n=-\infty}^{n_0} t_n^{-sp} \|u\|^p_{L^p(Z)}\lesssim \|u\|^p_{L^p(Z)},$$
	which is what we need, and hence, the lemma is proved.
\end{proof}


\begin{lem}\label{embedding-lp}
	Assume that $(Z, d_Z, \nu_Z)$ and $(W, d_W, \nu_W)$ are   Ahlfors $Q_Z$-regular and Ahlfors $Q_W$-regular spaces with $Q_Z>0$ and $Q_W>0$, respectively. Let $\theta_1>0$, $\theta_2>0$ and $0<r<2\diam Z$. Suppose that $f: Z\rightarrow W$ is a locally $(\theta_1, \theta_2, r)$-biH\"{o}lder continuous mapping  such that $Q_Z\geq \theta_1 Q_W$. Then for any $p\geq 1$, the mapping $f$ induces a bounded embedding $f_{\#}: L^p(W)\rightarrow L^p(Z)$ via composition.
\end{lem}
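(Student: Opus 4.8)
The plan is to reduce the statement to a domination of pushforward measures. For a Borel set $E\subseteq W$ write $(\nu_Z\circ f^{-1})(E)=\nu_Z(f^{-1}(E))$, which is a Borel measure on $W$ since $f$ is a homeomorphism. I will show there is a constant $C_0$, depending only on the data, such that $\nu_Z(f^{-1}(E))\le C_0\,\nu_W(E)$ for every Borel set $E\subseteq W$. Granting this domination, the absolute continuity $\nu_Z\circ f^{-1}\ll\nu_W$ guarantees that $f_{\#}u:=u\circ f$ is well defined on $\nu_W$-equivalence classes (a $\nu_W$-null set pulls back to a $\nu_Z$-null set), and the change-of-variables formula for the pushforward gives, for every $u\in L^p(W)$,
\begin{align*}
\|f_{\#}u\|_{L^p(Z)}^p
&=\int_Z|u(f(x))|^p\,d\nu_Z(x)
=\int_W|u(w)|^p\,d(\nu_Z\circ f^{-1})(w)\\
&\le C_0\int_W|u(w)|^p\,d\nu_W(w)
=C_0\,\|u\|_{L^p(W)}^p,
\end{align*}
so $f_{\#}$ is a bounded linear map with operator norm at most $C_0^{1/p}$. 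Everything therefore reduces to the measure domination. Note that only the measure-theoretic side of $f$ enters, so I expect $\theta_2$ and the upper H\"older bound to play no role.

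I would first establish the domination on small balls. By Proposition~\ref{1-8-8}$(2)$, $f^{-1}$ is locally $(1/\theta_2,1/\theta_1,r')$-biH\"older with $r'=C^{-1}r^{\theta_1}$ and some coefficient $C_4$; in particular $d_Z(f^{-1}(w_1),f^{-1}(w_2))\le C_4\,d_W(w_1,w_2)^{1/\theta_1}$ whenever $d_W(w_1,w_2)<r'$. Fix a ball $B_W(w,\rho)$ with $\rho$ small enough that $2\rho<r'$ and that the radii occurring below stay below $2\diam Z$ and $2\diam W$. If $f^{-1}(B_W(w,\rho))$ is nonempty, pick $x_0$ in it; then for every $x\in f^{-1}(B_W(w,\rho))$ we have $d_W(f(x),f(x_0))<2\rho<r'$, whence $d_Z(x,x_0)\le C_4(2\rho)^{1/\theta_1}$. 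Thus $f^{-1}(B_W(w,\rho))\subseteq B_Z\big(x_0,2C_4(2\rho)^{1/\theta_1}\big)$, and the Ahlfors $Q_Z$-regularity of $Z$ together with the Ahlfors $Q_W$-regularity of $W$ yield
$$\nu_Z\big(f^{-1}(B_W(w,\rho))\big)\lesssim \rho^{\,Q_Z/\theta_1}\qquad\text{and}\qquad \nu_W\big(B_W(w,\rho)\big)\gtrsim \rho^{\,Q_W}.$$
Since the hypothesis $Q_Z\ge\theta_1 Q_W$ gives $Q_Z/\theta_1-Q_W\ge0$, the factor $\rho^{\,Q_Z/\theta_1-Q_W}$ stays bounded for $\rho$ bounded above, and I obtain $\nu_Z(f^{-1}(B_W(w,\rho)))\le C_0\,\nu_W(B_W(w,\rho))$ for all balls of radius at most some fixed $\rho_0>0$.

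To pass from balls to an arbitrary Borel set $E\subseteq W$, I would use outer regularity of $\nu_W$ to choose an open $U\supseteq E$ with $\nu_W(U)\le\nu_W(E)+\varepsilon$, cover $U$ by balls of radius at most $\rho_0/5$ centered in $U$ and contained in $U$, and apply the basic $5r$-covering lemma to extract a disjoint subfamily $\{B_W(w_i,\rho_i)\}$ with $U\subseteq\bigcup_i B_W(w_i,5\rho_i)$. Using the ball estimate, the doubling property of $\nu_W$ (recall that Ahlfors regular spaces are doubling), and disjointness of the $B_W(w_i,\rho_i)\subseteq U$,
\begin{align*}
\nu_Z(f^{-1}(E))&\le\nu_Z(f^{-1}(U))\le\sum_i\nu_Z\big(f^{-1}(B_W(w_i,5\rho_i))\big)\\
&\lesssim\sum_i\nu_W\big(B_W(w_i,5\rho_i)\big)\lesssim\sum_i\nu_W\big(B_W(w_i,\rho_i)\big)\le\nu_W(U)\le\nu_W(E)+\varepsilon.
\end{align*}
Letting $\varepsilon\to0$ gives the desired domination $\nu_Z(f^{-1}(E))\le C_0\,\nu_W(E)$, completing the reduction of the first paragraph.

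The two Ahlfors-regularity estimates and the change of variables are routine; the step demanding the most care is the globalization from balls to general Borel sets, where one must invoke a covering lemma valid in the doubling setting and keep the overlap (hence the comparison constants) under control. This covering/summation step, together with the bookkeeping needed to ensure that every radius appearing above stays in the admissible range for Ahlfors regularity, is where I expect the main difficulty to lie; the exponent condition $Q_Z\ge\theta_1 Q_W$ is exactly what keeps the local ratios $\rho^{\,Q_Z/\theta_1-Q_W}$ uniformly bounded and thus makes the whole scheme work.
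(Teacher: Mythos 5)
Your reduction to the pushforward domination $\nu_Z(f^{-1}(E))\le C_0\,\nu_W(E)$ and the concluding change-of-variables step follow the same measure-theoretic scheme as the proof of \cite[Lemma 7.1]{BBGS}, to which the paper defers, and that part of your plan is sound. The genuine gap is in your ball estimate. You invoke Proposition \ref{1-8-8}$(2)$ to conclude that whenever $d_W(f(x),f(x_0))$ is below the threshold $r'=C^{-1}r^{\theta_1}$ one has $d_Z(x,x_0)\le C_4\,d_W(f(x),f(x_0))^{1/\theta_1}$, hence that $f^{-1}(B_W(w,\rho))$ lies in a single ball of radius $\approx\rho^{1/\theta_1}$. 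This inference is false for \emph{locally} biH\"older maps: Definition \ref{LbHC} constrains only pairs with $d_Z(x,y)<r$, so nothing prevents $f$ from mapping points at $Z$-distance $\ge r$ to points at arbitrarily small $W$-distance, and then the preimage of a tiny ball need not be contained in any tiny ball. Concretely, let $Z=C$ be the middle-thirds Cantor set, $C_0=C\cap[0,\tfrac13]$, $C_1=C\cap[\tfrac23,1]$, let $W=C_0\cup\{t-\tfrac13+\delta:\,t\in C_1\}$ for a fixed small $\delta>0$, and let $f$ be the identity on $C_0$ and $t\mapsto t-\tfrac13+\delta$ on $C_1$. Both spaces are Ahlfors $Q$-regular with $Q=\log 2/\log 3$, and $f$ is locally $(1,1,\tfrac13)$-biH\"older with coefficient $1$ (any pair at distance $<\tfrac13$ lies in a single piece, on which $f$ is an isometry), so all hypotheses of Lemma \ref{embedding-lp} hold; yet $f^{-1}\big(B_W(\tfrac13,2\delta)\big)$ contains both $\tfrac13$ and $\tfrac23$ and so has diameter at least $\tfrac13$, not $\lesssim\delta$. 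The same example shows that Proposition \ref{1-8-8}$(2)$ is itself false in this generality (the hidden step ``$d_W(w_1,w_2)<C^{-1}r^{\theta_1}$ implies $d_Z(f^{-1}(w_1),f^{-1}(w_2))<r$'' has no justification without extra hypotheses), so citing it does not close the gap. Note that the conclusion of the lemma still holds in this example---$f$ is measure-preserving---so what breaks is your proof, not the statement; but the breakdown is real: preimages of small balls can consist of several small ``clusters'' spread far apart in $Z$, and there is no uniform bound on their number in general.

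The fix is a restructuring rather than bookkeeping: localize in $Z$, not in $W$. Partition $Z=\bigsqcup_{j}Z_j$ into countably many Borel pieces of diameter $<r$ (disjointify a countable cover by balls of radius $r/4$). For $x,y\in Z_j$ \emph{every} pair satisfies \eqref{defn-biholder-intro}, so $Z_j\cap f^{-1}(B_W(w,\rho))$ does have diameter $\le(2C\rho)^{1/\theta_1}$, and your Ahlfors-regularity computation gives $\nu_Z\big(Z_j\cap f^{-1}(B_W(w,\rho))\big)\lesssim\nu_W\big(B_W(w,\rho)\big)$ for $\rho\le\rho_0$, exactly as you wrote but with $Z_j\cap f^{-1}$ in place of $f^{-1}$. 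Then, for a Borel set $E\subseteq W$, run your covering argument on $F_j:=f(Z_j)\cap E$ rather than on $E$: covering $F_j$ by balls of radii $\le\rho_0$ with total measure $\lesssim\nu_W(F_j)+2^{-j}\varepsilon$ yields $\nu_Z\big(Z_j\cap f^{-1}(E)\big)\lesssim\nu_W\big(f(Z_j)\cap E\big)+2^{-j}\varepsilon$. Summing over $j$ and using that the sets $f(Z_j)$ are pairwise disjoint and cover $W$---this injectivity is precisely what controls the total mass of all the clusters---gives $\nu_Z(f^{-1}(E))\lesssim\nu_W(E)+\varepsilon$, after which the rest of your argument (the $5r$-covering step, the exponent bookkeeping via $Q_Z\ge\theta_1 Q_W$, and the change of variables) goes through verbatim. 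This localization-in-$Z$ step is the substantive content of adapting \cite[Lemma 7.1]{BBGS} from the global to the local biH\"older setting, and it is what your proposal misses.
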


When $Z$ and $W$ are bounded and $f$ is biH\"{o}lder continuous, Lemma \ref{embedding-lp} coincides with \cite[Lemma 7.1]{BBGS}. The proof method of \cite[Lemma 7.1]{BBGS} is also applicable to Lemma \ref{embedding-lp}, and so, we omit the details here.

Now, we are ready to prove Theorem \ref{thm-1}.

\subsection*{Proof of Theorem \ref{thm-1}}
Assume that $(Z, d_Z, \nu_Z)$ and $(W, d_W, \nu_W)$ are Ahlfors $Q_Z$-regular and Ahlfors $Q_W$-regular spaces with $Q_Z>0$ and $Q_W>0$, respectively.  Suppose that $f: (Z, d_Z)\rightarrow (W, d_W)$ is a locally $(\theta_1, \theta_2, r)$-biH\"{o}lder continuous mapping with $\theta_1\geq \theta_2>0$ and $r\in (0, 2\diam Z)$.
	
Let $u\in B^{s}_{p, p}(W)$ with $s>0$ and $p\geq 1$, and let $v=u\circ f$.
	Since $u\in L^p(W)$, it follows from Lemma \ref{embedding-lp} that
	\begin{equation}\label{eq-1-4}
	\|v\|_{L^p(Z)}\lesssim \|u\|_{L^p(W)},
	\end{equation}
which implies $v\in L^p(Z)$.

	For each $n\in \mathbb{Z}$, let $t_n=C\sigma^n$, where $C>0$ and $0<\sigma<1$. Apparently, there is $n_0\in \mathbb Z$ such that for $n\in \mathbb{Z}$, if $n\geq n_0$, then $$t_{n}<r.$$  Also, it follows from Lemma \ref{lem-besov-norm} that
	for any $s'>0$,
\begin{align}
		\|v\|^p_{B^{s^\prime}_{p, p}(Z)} \approx \|v\|^p_{L^p(Z)}+\sum_{n=n_0}^{+\infty}  I_n,	\label{zw-10-16}
	\end{align}
 where
$$I_n=t_n^{- s^\prime p}\int_Z\vint_{B(x, t_n)}|v(x)-v(y)|^p d\nu_Z(y)d\nu_Z(x)\notag.$$

In the following, we are going to estimate $I_n$. For this, we first estimate the integral
	$$i_n=\vint_{B(x, t_n)}|v(x)-v(y)|^p d\nu_Z(y).$$

Since for any $n\geq n_0$, $t_n<r$, we infer from the locally biH\"{o}lder continuity of $f$ that there is $C_0\geq 1$ such that $$f(B(x, t_n))\subset B(f(x), C_0t_n^{\theta_2}).$$ Then the Ahlfors regularity of $(Z, d_Z, \nu_Z)$ gives
	\begin{align}\label{1-5-2}
	i_n &\approx \frac{1}{t_n^{Q_Z}} \int_{Z}|v(x)-u\circ f(y)|^p \chi_{B(x, t_n)}(y) d\nu_Z(y)
\\ \nonumber
		&\leq \frac{1}{t_n^{Q_Z}} \int_{Z}|v(x)-u\circ f(y)|^p \chi_{B(f(x), C_0t_n^{\theta_2})}(f(y)) d\nu_Z(y).
	\end{align}

As $v\in L^p(Z)$, we know that $v(x)$ is finite for almost every $x\in Z$.
 Since
	\begin{align*}
 \int_W |v(x)-u(y^\prime)|^p\chi_{B(f(x), C_0t_n^{\theta_2})}(y^\prime)d\nu_W(y^\prime)\lesssim \;& |v(x)|^p\nu_W(B(f(x), C_0t_n^{\theta_2}))\\ &+ \int_{B(f(x), C_0t_n^{\theta_2})}|u(y^\prime)|^pd\nu_W(y^\prime),
 	\end{align*}
 we see from the Ahlfors regularity of $(W, d_W, \nu_W)$ that for almost every $x\in Z$, as a function of $y^\prime$, $|v(x)-u(y^\prime)|^p\chi_{B(f(x), C_0t_n^{\theta_2})}(y^\prime)$ belongs to $L^1(W)$.
  It follows from Lemma \ref{embedding-lp} that for almost every $x\in Z$,
$$\int_{Z}|v(x)-u\circ f(y)|^p \chi_{B(f(x), C_0t_n^{\theta_2})}(f(y)) d\nu_Z(y)\lesssim  \int_W |v(x)-u(y^\prime)|^p\chi_{B(f(x), C_0t_n^{\theta_2})}(y^\prime) d\nu_W(y^\prime),$$
and thus, we deduce from \eqref{1-5-2} that
	\begin{align*}
		i_n \lesssim  t_n^{\theta_2 Q_W-Q_Z} \vint_{B(f(x), C_0t_n^{\theta_2})} |v(x)-u(y^\prime)|^p  d\nu_W(y^\prime).
	\end{align*}This is what we want.

Since $u\in B^{s}_{p, p}(W)$, it follows from Lemma \ref{lem-besov-norm} that for any $n\geq n_0$,
\begin{equation*}
\int_W \vint_{B(x^\prime, C_0t_n^{\theta_2})} |u(x^\prime)-u(y^\prime)|^p  d\nu_W(y^\prime)d\nu_W(x^\prime)<\infty,
\end{equation*}
which shows that $\vint_{B(x^\prime, C_0t_n^{\theta_2})} |u(x^\prime)-u(y^\prime)|^p  d\nu_W(y^\prime)$ belongs to $L^1(W)$ as a function of $x^\prime$.
 Again, by Lemma \ref{embedding-lp}, we obtain that
	\begin{align*}
		I_n&\lesssim t_n^{-s^\prime p+\theta_2 Q_W-Q_Z} \int_Z \vint_{B(f(x), C_0t_n^{\theta_2})} |u\circ f(x)-u(y^\prime)|^p  d\nu_W(y^\prime) d\nu_Z(x)\\
		&\lesssim t_n^{-s^\prime p+\theta_2 Q_W-Q_Z}  \int_W \vint_{B(x^\prime, C_0t_n^{\theta_2})} |u(x^\prime)-u(y^\prime)|^p d\nu_W(y^\prime)d\nu_W(x^\prime).
	\end{align*}
	
Assume that $s$ and $s^\prime$ satisfy the relation \eqref{s-s-relation}. Then we know that for any $n\geq n_0$,
	$$t_n^{-s^\prime p+\theta_2 Q_W-Q_Z}\leq r^{\theta_2(Q_W+sp)-s^\prime p-Q_Z}(t_n^{\theta_2})^{-sp}.$$
	This implies that
	$$I_n\lesssim (t_n^{\theta_2})^{-sp}  \int_W \vint_{B(x^\prime, C_0t_n^{\theta_2})} |u(x^\prime)-u(y^\prime)|^p d\nu_W(y^\prime)d\nu_W(x^\prime).$$

By substituting the estimate of $I_n$ into \eqref{zw-10-16}, we conclude from Lemma \ref{lem-besov-norm} that
	$$\|v\|^p_{B^{s^\prime}_{p, p}(Z)} \lesssim \|u\|^p_{B^{s}_{p, p}(W)}. $$
	
Let $$f_{\#}(u)=u\circ f.$$ Then we have proved that $f_{\#}: B^{s}_{p, p}(W)\rightarrow B^{ s^\prime}_{p, p}(Z)$ is a bounded embedding.
\qed

\section{Power quasisymmetry, locally biH\"{o}lder continuity and uniform boundedness}\label{sec-4}

The purpose of this section is to prove Proposition \ref{uniformly-bounded} and Theorem \ref{thm-2}.

\begin{proof}[{\bf Proof of Proposition \ref{uniformly-bounded}}]

It follows from the assumption of $f$ being $r$-uniformly bounded that there must exist constants $a>0$ and $b>0$ such that for all $x\in Z$,
	 \beq\label{1-7-1}a\leq \diam f\big(B(x, r)\big)\leq b.\eeq

Let $x\in Z$ and $s\in (0, 2\diam Z)$. To prove that $f$ is $s$-uniformly bounded, we only need to consider two cases: $s>r$ and $s<r$. For the first case, it follows from the fact $B(x, r)\subset B(x, s)$ and \eqref{1-7-1} that
 \beq\label{1-7-2}\diam f(B(x, s))\geq \diam f\big(B(x, r)\big)\geq a.\eeq

If $B(x, s)\setminus B(x, r)=\emptyset$, obviously, we obtain from \eqref{1-7-1} that
	\beq\label{1-7-3}
	\diam f(B(x, s))=\diam f\big(B(x, r)\big)\leq b,
	\eeq
and if $B(x, s)\setminus B(x, r)\not=\emptyset$, it follows from the uniform perfectness of $(Z, d_Z)$ that there exists $z\in B(x, r)$ such that $$\frac{r}{\kappa}\leq d_Z(x, z)<r.$$ This indicates that for any $y\in B(x, s)$,
$$\frac{d_Z(x, y)}{d_Z(x, z)}\leq \frac{\kappa s}{r}.$$
Then the $\eta$-quasisymmetry of $f$ gives
	$$\frac{d_W(f(x), f(y))}{d_W(f(x), f(z))}\leq \eta\left(\frac{\kappa s}{r}\right),$$
	and thus, we get
	$$d_W(f(x), f(y))\leq \eta\left(\frac{\kappa s}{r}\right) \diam f\big(B(x, r)\big)\leq \eta\left(\frac{\kappa s}{r}\right) b.$$
	This implies that
\beq\label{1-7-4} \diam f(B(x, s))\leq \eta\left(\frac{\kappa s}{r}\right) b.\eeq
	
	For the remaining case, that is, $s<r$, the fact $B(x, s)\subset B(x, r)$ leads to
	\beq\label{1-7-5} \diam f(B(x, s))\leq \diam f\big(B(x, r)\big)\leq b.\eeq

If $B(x, r)\setminus B(x, s)=\emptyset$, apparently,
	\beq \label{1-7-6}
	\diam f(B(x, s))=\diam f\big(B(x, r)\big)\geq a,
\eeq
and if $B(x, r)\setminus B(x, s)\not=\emptyset$, then the similar reasoning as in the proof of \eqref{1-7-4} ensures that
\beq \label{1-7-7}
\diam f(B(x, s))\geq \frac{a}{\eta\left(\frac{\kappa r}{s}\right)}.
\eeq

Now, we conclude from \eqref{1-7-2}$-$\eqref{1-7-7} that for all $x\in Z$,
	$$a_1\leq \diam f(B(x, s))\leq b_1,$$
where
$$a_1=\min\left\{a,\;\frac{a}{\eta\left(\frac{\kappa r}{s}\right)}\right\}\;\;\mbox{and}\;\;b_1=\max\left\{b,\;\eta\left(\frac{\kappa s}{r}\right)b\right\}.$$	
This shows that $f$ is $s$-uniformly bounded.
\end{proof}

\begin{proof}[{\bf Proof of Theorem \ref{thm-2}}]
	$(1)\Rightarrow(2)$.  Assume that $f$ is locally $(\theta, 1/\theta, r)$-biH\"{o}lder continuous with $\theta\geq 1$ and $0<r<2\diam Z$. Then there is $C\geq 1$ such that for any $x\in Z$ and any $y\in B(x, r)$,
	\begin{equation}\label{lem-10-11}
		C^{-1}d_Z(x, y)^{\theta} \leq d_W(f(x), f(y)) \leq Cd_Z(x, y)^{1/\theta},
	\end{equation}
	which leads to
	$$
	\diam f\big(B(x, r)\big)\leq 2Cr^{1/\theta}.
	$$

Moreover, it follows from Lemma \ref{1-8-4} that there is $z\in Z$ such that
$$\frac{r}{\mu}\leq d_Z(x, z)<r,$$ where $\mu=\max\{8, \kappa\}$.
Then \eqref{lem-10-11} leads to
	$$\diam f\big(B(x, r)\big)\geq d_W(f(x), f(z))\geq 	\frac{r^\theta}{\mu^\theta C}.$$
	
	These show that $f$ is $r$-uniformly bounded.
	
	$(2)\Rightarrow(1)$.
	Assume that $f$ is $r$-uniformly bounded with $0<r<2\diam Z$. This assumption implies that there are two constants $a>0$ and $b>0$ such that for any $\xi\in Z$,
\begin{equation}\label{l1-8-1}
	a\leq \diam f(B(\xi, r))\leq b.
	\end{equation}

Let $x\in Z$. By Lemma \ref{1-8-4}, we see that there is $\zeta\in Z$ such that
\begin{equation}\label{1-8-7}
	\frac{r}{\mu}\leq d_Z(x, \zeta)<r,
\end{equation}
where $\mu=\max\{8, \kappa\}$.
We assert that
	\begin{equation}\label{lemma-bdd}
		\frac{a}{3\lambda {\mu}^\theta} \leq d_W(f(x), f(\zeta))\leq b.
	\end{equation}

	The right-side inequality of \eqref{lemma-bdd} easily follows from \eqref{l1-8-1}. For the proof of the left-side inequality, let $\zeta_1\in B(x, r)$ be such that
	$$d_W(f(x), f(\zeta_1))\geq \frac {1}{3}\diam f\big(B(x, r)\big),$$
and then, it follows from \eqref{l1-8-1} that
$$d_W(f(x), f(\zeta_1))\geq  \frac a 3.$$	

Since
	$$\frac{d_Z(x, \zeta_1)}{d_Z(x, \zeta)}\leq \mu,$$
	we know from the assumption of $f$ being $(\theta, \lambda)$-power quasisymmetric with $\theta\geq 1$ and $\lambda\geq 1$ that
	$$\frac{d_W(f(x), f(\zeta_1))}{d_W(f(x), f(\zeta))} \leq \lambda {\mu}^{\theta}.$$
	Hence
	$${d_W(f(x), f(\zeta))} \geq \frac{1}{\lambda {\mu}^\theta}d_W(f(x), f(\zeta_1)) \geq \frac{a}{3\lambda {\mu}^\theta},$$
	which is what we need. Thus the estimates in \eqref{lemma-bdd} are proved.

Let $y\in Z$ be such that $$d_Z(x, y)<r.$$
	
If $d_Z(x, y)\geq d_Z(x, \zeta)$, then $r/{\mu}\leq d_Z(x, y)<r$. It follows from \eqref{lemma-bdd} that
	\beq\label{1-8-5}
\frac{a}{3\lambda (r\mu)^\theta} d_Z(x, y)^{\theta} \leq \frac{a}{3\lambda {\mu}^\theta} 	\leq d_W(f(x), f(y))\leq   b\leq \frac{b\mu^{\frac{1}{\theta}}}{r^{\frac{1}{\theta}}} d_Z(x, y)^{\frac{1}{\theta}}.
\eeq

If $d_Z(x, y)<d_Z(x, \zeta)$, it follows from the assumption of $f$ being $(\theta, \lambda)$-power quasisymmetric that
	\begin{equation*}
		\lambda^{-1} \left(\frac{d_Z(x, y)}{d_Z(x, \zeta)}\right)^{\theta}\leq \frac{d_W(f(x), f(y))}{d_W(f(x), f(\zeta))}\leq \lambda\left(\frac{d_Z(x, y)}{d_Z(x, \zeta)}\right)^{1/\theta},
	\end{equation*}
and then, we deduce from \eqref{1-8-7} and \eqref{lemma-bdd} that
\beq\label{1-8-6}
\frac{a}{3\lambda^2 {(r\mu)}^\theta } d_Z(x, y)^{\theta}\leq d_W(f(x), f(y))\leq \frac{\lambda b\mu^{\frac{1}{\theta}}}{r^{\frac{1}{\theta}}}d_Z(x, y)^{\frac{1}{\theta}}.
\eeq

Now, we conclude from \eqref{1-8-5} and \eqref{1-8-6} that
$f$ is locally $(\theta, 1/\theta, r)$-biH\"{o}lder continuous, and hence, the theorem is proved.
\end{proof}

\section{Examples}\label{sec-5}

As an application of Theorem \ref{thm-2}, in this section, we construct two examples. The first example gives a quasisymmetric mapping between unbounded uniformly perfect spaces, which is not locally biH\"older continuous.
In the second example, we construct a locally biH\"older continuous mapping between unbounded Alhfors regular spaces, which is not biH\"{o}lder continuous. This example, together with Remark \ref{sec5}$(ii)$ below, also illustrates that Theorem \ref{thm-1} is more general than \cite[Proposition 7.2]{BBGS}.

\begin{example}\label{ex-add}
	Let $f$ be the radial stretching $f(x)=|x|x$ of $(\real^2, |\cdot|)$,
where $|\cdot|$ denotes the usual Euclidean metric. Then $f$ is a power quasisymmetric mapping but not locally biH\"older continuous.
\end{example}

\begin{proof}
It follows from \cite[p. 49]{V1} or \cite[p. 309]{HKM} that $f$ is a quasiconformal mapping. It is a fundamental fact that quasiconformal self-mappings of Euclidean spaces with dimension at least two are quasisymmetric,  see for example Gehring \cite{G1} or Heinonen-Koskela  \cite{HK}. This fact implies that
$f$ is a quasisymmetric mapping. Then we know from Theorem A that $f$ is power quasisymmetric. Here, we refer interested readers to \cite{HK,V1} for the definitions of quasiconformal mappings.

Suppose on the contrary that $f$ is locally biH\"older continuous. By Theorem \ref{thm-2} and Proposition \ref{uniformly-bounded}, $f$ must be $1$-uniformly bounded. However, for any $(n, 0)\in \mathbb R^2$ with $n\in \mathbb N$, a direct computation gives that
$$\diam f\left(B\big((n, 0), 1\big)\right)\geq (n+1)^2-n^2= 2n+1,$$
which contradicts the uniform boundedness condition. We conclude that $f$ is not locally biH\"older continuous.
\end{proof}
	
\begin{example}\label{ex}
	Let $f$ be the following self-homeomorphism of $(\real^2, |\cdot|)$:
	\begin{equation*}
		f(x)=\left\{\begin{array}{cl}
        0,& x=0,\\
		\frac{x}{|x|}\cdot |x|^{\frac 12}, &0<|x|<1, \\
			x, &|x|\geq 1.
		\end{array}\right.
	\end{equation*}
Then the following statements hold.
\begin{enumerate}
	\item[$(1)$]
	$f$ is power quasisymmetric.
	
	\item[$(2)$]
	$f$  is locally biH\"older continuous.
	
	\item[$(3)$]
	 $f$ is not $(\theta_1, \theta_2)$-biH\"older continuous for any $\theta_1>0$ and $\theta_2>0$.
	
\end{enumerate}
\end{example}
\begin{proof}
		$(1)$ The statement $(1)$ in the example follows from a similar argument with the one in the proof of Example \ref{ex-add}.
		
		$(2)$ To show $f$ is locally biH\"older continuous, by  Theorem \ref{thm-2}, it suffices to show that $f$ is $r$-uniformly bounded for some $r>0$. Choose $r=2$. Then it is obvious from the definition of $f$ that for any $x\in \real^2$,
		$$2\leq \diam f(B(x, 2))\leq 6.$$
		This implies that $f$ is $2$-uniformly bounded, and hence, it is locally biH\"older continuous.
		
		$(3)$  Suppose on the contrary that $f$ is $(\theta_1, \theta_2)$-biH\"older continuous for some $\theta_1$ and $\theta_2$ with $\theta_1\geq \theta_2>0$. Then $f$ is a snowflake mapping since $(\mathbb R^2, |\cdot|)$ is unbounded. That is, there are constants $C\geq 1$ and $\theta>0$ such that for any pair of  $x$ and $y$,
		$$
		C^{-1}|x-y|^{\theta}\leq |f(y)-f(x)|\leq C|x-y|^{\theta}.
		$$
		
		However, for any $x$ with $|x|<1$,
		$$
		|f(x)-f(0)|=|x|^{\frac12},
		$$ which implies that $\theta = \frac12$;
		and for any $x$ with $|x|\geq1$,
		$$
		|f(x)-f(0)|=|x|,
		$$ which shows that $\theta = 1$. We conclude from this contradiction that $f$ is not $(\theta_1, \theta_2)$-biH\"older continuous for any $\theta_1>0$ and $\theta_2>0$.
	\end{proof}
	
	\begin{rem}\label{sec5}
$(i)$		Following the arguments in the proofs of statements $(2)$ and $(3)$ in the Example \ref{ex}, it is not difficult to show that the mapping $f$ in Example \ref{ex} is  locally $(1, \frac12, 2)$-biH\"older continuous. We omit the detailed computations here.
		
$(ii)$		 It is known that $\real^2$ is $2$-Ahlfors regular. Let $s>0$, $s^\prime>0$ and $p\geq 1$ be parameters such that $(s-2s^\prime)p\geq 2$. Then we see that all assumptions in Theorem \ref{thm-1} are satisfied. Therefore, we infer from Theorem \ref{thm-1} that
		$f$ induces a canonical bounded embedding $f_{\#}: B^{s}_{p, p}(\mathbb R^2)\rightarrow B^{ s^\prime}_{p, p}(\mathbb R^2)$ via composition.
	\end{rem}

\subsection*{Acknowledgments}

The second author (X. Wang) was partly supported by NNSF of China under the number 12071121, and the third author (Z. Wang) was partly supported by NNSF of China under the number 12101226.
\vspace*{5mm}

\noindent
Address:

\noindent
MOE-LCSM, School of Mathematics and Statistics, Hunan Normal University, Changsha, Hunan 410081, P. R. China.

\bigskip

\noindent Manzi Huang,

\noindent{\it E-mail}:  \texttt{mzhuang@hunnu.edu.cn}

\medskip

\noindent Xiantao Wang,

\noindent{\it E-mail}:  \texttt{xtwang@hunnu.edu.cn}

\medskip

\noindent Zhuang Wang,

\noindent{\it E-mail}:  \texttt{zwang@hunnu.edu.cn}

\medskip

\noindent Zhihao Xu,

\noindent{\it E-mail}:  \texttt{734669860xzh@hunnu.edu.cn}


\begin{thebibliography}{99}
	
	\bibitem{BB11} A.~Bj\"{o}rn and J.~Bj\"{o}rn, {\it Nonlinear potential theory on metric spaces},  EMS Tracts in Mathematics, 17. European Mathematical Society (EMS), Z\"{u}rich, 2011. xii+403 pp.
	
	\bibitem{BBS03} A.~Bj\"orn, J.~Bj\"orn and N.~Shanmugalingam, {\it The Dirichlet problem for p-harmonic functions on metric spaces}, J. Reine  Angew. Math. 556 (2003), 173-203.
	
	\bibitem{BBS} A.~Bj\"{o}rn, J.~Bj\"{o}rn and N.~Shanmugalingam, {\it Extension and trace results for doubling metric measure spaces and their hyperbolic fillings}, J. Math. Pures Appl. 159 (2022), 196-249.
	
	\bibitem{BBGS} A.~Bj\"{o}rn, J.~Bj\"{o}rn, T.~Gill and N.~Shanmugalingam, {\it Geometric analysis on Cantor sets and trees}, J. Reine Angew. Math. 725 (2017), 63-114.
	
%
	
	
	
	
	\bibitem{BoSi} G.~Bourdaud and W.~Sickel, {\it Changes of variable in Besov spaces}, Math. Nachr. 198 (1999), 19-39.
	
	\bibitem{BP03} M.~Bourdon and H.~Pajot, {\it Cohomologie $l_p$ et espaces de Besov}, J. Reine Angew. Math. 558 (2003), 85-108.
	
	\bibitem{G1} F. W. Gehring, {\it The definitions and exceptional sets for quasiconformal mappings}, Ann. Acad. Sci. Fenn. Math., 281 (1960), 1-28.
%
%
%
%
	
	
	
	
	\bibitem{GKS10} A.~Gogatishvili, P.~Koskela and N.~Shanmugalingam, {\it Interpolation properties of Besov spaces defined on metric spaces}, Math. Nachr. 283 (2010), 215-231.
	
		\bibitem{GKZ13} A.~Gogatishvili,  P.~Koskela and Y. Zhou, {\it Characterizations of Besov and Triebel-Lizorkin spaces on metric measure spaces}, Forum Math. 25 (2013), 787--819.
	
\bibitem{H96} P.~Haj\l asz, {\it Sobolev spaces on an arbitrary metric space},  Potential Anal. 5 (1996), 403--415.
	

	\bibitem{H03} P.~Haj\l asz, {\it Sobolev space on metric-measure spaces, in Heat kernels and analysis on manifolds, graphs and metric spaces (Paris 2002),} Contemp. Math. 338, American Mathematical Society, Providence (2003), 173-218.
	
	\bibitem{HP} P.~Haj\l asz and P. Koskela, {\it Sobolev met Poincar\'e}, Mem. Amer. Math. Soc. (2000), no. 688, x+101 pp.
	
	
	
	
	\bibitem{H} J.~Heinonen, {\it Lectures on analysis on metric spaces},  Universitext. Springer-Verlag, New York, 2001. x+140 pp.
	
	\bibitem{HKM92} J.~Heinonen, T.~Kilpel\"{a}inen and O. Martio, {\it Harmonic morphisms in nonlinear potential theory,}	Nagoya Math. J. 125 (1992), 115-140.
	
		\bibitem{HKM} J.~Heinonen, T.~Kilpel\"{a}inen and O. Martio, {\it Nonlinear potential theory of degenerate elliptic equations},  Dover Publications, Inc., Mineola, NY, 2006.

	\bibitem{HK} J.~Heinonen and P.~Koskela, {\it Quasiconformal maps in metric spaces with controlled geometry}, Acta Math. 181 (1998), 1-61.
	
	\bibitem{HKST} J.~Heinonen, P.~Koskela, N.~Shanmugalingam and J.~Tyson, {\it Sobolev Spaces on Metric Measure Spaces: An Approach Based on Upper Gradients}. Cambridge: Cambridge University Press, 2015.
	
	
	
	\bibitem{HenK} S.~Hencl and L.~Kleprl\'ik, {\it Composition of $q$-quasiconformal mappings and functions in Orlicz-Sobolev spaces}, Illinois J. Math. 56 (2012), 931-955.
	
	
	
	
%
	
%
%
	
	
	
	\bibitem{KXZZ} P.~Koskela,  J. Xiao, Y.~Zhang  and Y. Zhou, {\it A quasiconformal composition problem for the $Q$-spaces}, J. Eur. Math. Soc. (JEMS) 19 (2017), 1159-1187.
	
	\bibitem{KYZ10} P. Koskela, D. Yang and Y. Zhou, {\it A characterization of Haj\l asz-Sobolev and Triebel-Lizorkin spaces via grand Littlewood-Paley functions}, J. Funct. Anal. 258 (2010), 2637-2661.
	
	
	\bibitem{KYZ} P.~Koskela, D.~Yang and Y.~Zhou, {\it Pointwise characterizations of Besov and Triebel-Lizorkin spaces and quasiconformal mappings}, Adv. Math. 226 (2011), 3579-3621.
	
	
	
	
	
	
	\bibitem{Rie} H. M. Riemann, {\it Functions of bounded mean oscillation and quasiconformal mappings}, 	Comment. Math. Helv. 49 (1974), 260-276.
	
\bibitem{N00} N. Shanmugalingam, \emph{Newtonian spaces: An extension of Sobolev spaces to metric measure spaces}, { Rev. Mat. Iberoam.} 243-279 ,Vol. 16, 2000.

	
	
%
%
%
	
	\bibitem{TV} P.~Tukia and J.~V\"{a}is\"{a}l\"{a}, {\it Quasi-symmetric embeddings of metric spaces}, Ann. Acad. Sci. Fenn. Ser. A I Math. 5 (1980), 97-114.
	
	\bibitem{V1} J.~V\"{a}is\"{a}l\"{a}, {\it Lectures on $n$-dimensional quasiconformal mappings}, Lecture Notes in Mathematics, Vol. 229. Springer-Verlag, Berlin-New York, 1971.
%
	
	\bibitem{Vo} S. K. Vodopyanov, {\it Mappings of homogeneous groups and embeddings of function spaces},	Sibirsk. Mat. Zh. 30 (1989), 25-41.
	
\end{thebibliography}
\end{document}